\def\DateTime{30/January/2020}
\def\Version{Version $1.0$}
\def\yes{\if00}
\def\ifquery{\yes}
\theoremstyle{plain}
\newtheorem{Theorem}{Theorem}[section]
\newtheorem{Proposition}[Theorem]{Proposition}
\newtheorem{Lemma}[Theorem]{Lemma}
\newtheorem{Corollary}[Theorem]{Corollary}
\theoremstyle{definition}
\newtheorem{Definition}[Theorem]{Definition}
\newtheorem{Conjecture}[Theorem]{Conjecture}
\newtheorem{Remark}[Theorem]{Remark}
\numberwithin{equation}{section}
\def\rom{\textup}
\newcommand{\ZZ}{{\mathbb{Z}}}
\newcommand{\QQ}{{\mathbb{Q}}}
\newcommand{\RR}{{\mathbb{R}}}
\newcommand{\CC}{{\mathbb{C}}}
\newcommand{\PP}{{\mathbb{P}}}
\newcommand{\Proj}{\operatorname{Proj}}
\newcommand{\ord}{\operatorname{ord}}
\newcommand{\ndot}{\raisebox{.4ex}{.}}
\newcommand{\fin}{\operatorname{fin}}
\newcommand{\indic}{1\hspace{-0.25em}\mathrm{l}}
\def\colorsout#1{\bgroup\markoverwith{\textcolor{#1}{\rule[0.5ex]{2pt}{0.7pt}}}\ULon} 
\def\query#1{\setlength\marginparwidth{65pt} 
\marginpar{\raggedright\fontsize{7.81}{9} 
\selectfont\upshape\hrule\smallskip 
#1\par\smallskip\hrule}}
\def\query#1{}
\definecolor{ruby}{rgb}{0.88, 0.07, 0.37}
\definecolor{coolblack}{rgb}{0.0, 0.18, 0.39}
\definecolor{darkspringgreen}{rgb}{0.09, 0.45, 0.27}
\definecolor{emerald}{rgb}{0.31, 0.78, 0.47}
\definecolor{lavenderindigo}{rgb}{0.58, 0.34, 0.92}
\definecolor{mred}{rgb}{0.83, 0.0, 0.0}
\DeclareSymbolFont{bbold}{U}{bbold}{m}{n}
\DeclareMathSymbol{\bbalpha}{\mathord}{bbold}{"0B}
\DeclareMathSymbol{\bbbeta}{\mathord}{bbold}{"0C}
\DeclareMathSymbol{\bbgamma}{\mathord}{bbold}{"0D}
\DeclareMathSymbol{\bbdelta}{\mathord}{bbold}{"0E}
\DeclareMathSymbol{\bbespilon}{\mathord}{bbold}{"0F}
\DeclareMathSymbol{\bbzeta}{\mathord}{bbold}{"10}
\DeclareMathSymbol{\bbeta}{\mathord}{bbold}{"11}
\DeclareMathSymbol{\bbtheta}{\mathord}{bbold}{"12}
\DeclareMathSymbol{\bbiota}{\mathord}{bbold}{"13}
\DeclareMathSymbol{\bbkappa}{\mathord}{bbold}{"14}
\DeclareMathSymbol{\bblambda}{\mathord}{bbold}{"15}
\DeclareMathSymbol{\bbmu}{\mathord}{bbold}{"16}
\DeclareMathSymbol{\bbnu}{\mathord}{bbold}{"17}
\DeclareMathSymbol{\bbxi}{\mathord}{bbold}{"18}
\DeclareMathSymbol{\bbpi}{\mathord}{bbold}{"19}
\DeclareMathSymbol{\bbrho}{\mathord}{bbold}{"1A}
\DeclareMathSymbol{\bbsigma}{\mathord}{bbold}{"1B}
\DeclareMathSymbol{\bbtau}{\mathord}{bbold}{"1C}
\DeclareMathSymbol{\bbupsilon}{\mathord}{bbold}{"1D}
\DeclareMathSymbol{\bbphi}{\mathord}{bbold}{"1E}
\DeclareMathSymbol{\bbchi}{\mathord}{bbold}{"1F}
\DeclareMathSymbol{\bbpsi}{\mathord}{bbold}{"20}
\begin{document}

\title[Toward Fermat's conjecture over arithmetic function fields]%
{Toward Fermat's conjecture over \\arithmetic function fields}
\author{Atsushi Moriwaki}
\email{moriwaki@math.kyoto-u.ac.jp}
\address{Department of Mathematics, Faculty of Science, Kyoto University, Kyoto, 606-8502, Japan}
\thanks{\hskip-1em{\em Date and Version}: \DateTime\ (\Version)}

\begin{abstract}
Let $K$ be an arithmetic function field, that is, a field of finite type over $\QQ$.
In this note, as an application of the height theory due to Chen-Moriwaki \cite{CMArakelovAdelic},
we would like to show that
the solutions of Fermat's curve $X^N + y^N = 1$ of degree $N$ over $K$ consist of
only either $0$ or roots of unity for almost positive integers $N$.
More precisely, 
the density of such $N$ in $\ZZ_{\geqslant 1}$ is $1$.
\end{abstract}

\maketitle

\section*{Conclusion}

In this note, we would like to show an application of the new height theory due to Chen-Moriwaki
\cite{MoARH,CMArakelovAdelic} to Fermat's conjecture over an arithmetic function field.

Let $N$ be a positive integer and 
$F_N$ be the \emph{Fermat curve of degree $N$} over $\ZZ$, that is, 
\[
F_N = \Proj (\ZZ[X, Y, Z]/(X^N + Y^N - Z^N)).
\]
Let $K$ be a field.
We say that $F_N$ has \emph{Fermat's property over $K$} if
\[
\{ (x, y) \in K^2 \mid x^N + y^N = 1 \} \subseteq (\{ 0 \} \cup \mu(K))^2,
\]
where $\mu(K)$ is the group consisting of roots of unity in $K$, that is,
\[
\mu(K) = \{ x \in K \mid \text{$\exists\ n \in \ZZ_{\geqslant 1}$ such that $x^n = 1$} \}.
\]

\begin{Theorem}\label{thm:Fermat:property}
We assume the following:
\begin{enumerate}
\renewcommand{\labelenumi}{\textup{(\roman{enumi})}}
\item The field $K$ has a proper adelic structure  
with Northcott's property
(see Section~\ref{sec:adelic:structure}).
\item There is a positive integer $p_0$ such that $F_{p}(K)$ is finite for any prime number $p \geqslant p_0$.
\end{enumerate}
Then one has
\[
\lim_{m\to\infty} 
\frac{\#\left( \{ N \in \ZZ \mid \text{$1 \leqslant N \leqslant m$ and $F_N$ has Fermat's property over $K$} \}\right)}{m} = 1.
\]
\end{Theorem}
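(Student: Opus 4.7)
The approach is to combine Northcott's property with the given finiteness of $F_p(K)$ for primes $p \geqslant p_0$, ultimately reducing the set of $N$ for which $F_N$ fails Fermat's property to a sparse subset of $\ZZ_{\geqslant 1}$.

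\emph{Step 1 (Root-extraction lemma).} I would first establish: for any $a \in K$ with $a \neq 0$ and $a \notin \mu(K)$, the set
\[
M(a) := \{m \in \ZZ_{\geqslant 1} : \exists\, y \in K,\ y^m = a\}
\]
is finite. Denoting by $h$ the height coming from the proper adelic structure, the multiplicativity $h(y^m) = m\,h(y)$ gives $h(y) = h(a)/m \leqslant h(a)$. Northcott's property forces $\{y \in K : h(y) \leqslant h(a)\}$ to be finite. Since any such root $y$ lies outside $\{0\} \cup \mu(K)$ (otherwise so would $a$), distinct exponents correspond to distinct roots, so $|M(a)|$ is bounded by the cardinality of this finite candidate set.

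\emph{Step 2 (Bad $N$'s divisible by a prime $p \geqslant p_0$).} For each such prime, set
\[
B_p := F_p(K) \setminus (\{0\} \cup \mu(K))^2,
\]
a finite set by hypothesis (ii), and
\[
U_p := \{N \in \ZZ_{\geqslant 1} : p \mid N \text{ and } F_N \text{ does not have Fermat's property over } K\}.
\]
Given $N \in U_p$ with bad solution $(x, y) \in F_N(K)$ having (say) $x \notin \{0\} \cup \mu(K)$, set $n = N/p$. Then $(x^n, y^n) \in F_p(K)$ and $x^n \notin \{0\} \cup \mu(K)$, so $(x^n, y^n) \in B_p$. Thus $x$ is an $n$-th root of $a := x^n$, which is one of the finitely many first coordinates of elements of $B_p$; by Step~1, $n$ lies in a finite set. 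Hence $U_p$ is finite.

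\emph{Step 3 (Density conclusion).} Let $U$ denote the set of $N$ for which $F_N$ fails Fermat's property. Every $N \in U$ either has all prime factors strictly below $p_0$ (hence is $(p_0-1)$-smooth, and such numbers have density $0$ in $\ZZ_{\geqslant 1}$ by standard smooth-number estimates, e.g.\ $\Psi(m, p_0 - 1) = O((\log m)^{\pi(p_0 - 1)})$) or has some prime factor $p \geqslant p_0$, in which case $N \in U_p$. Partitioning by the largest prime factor $P(N)$ yields
\[
\#(U \cap [1, m] \cap \{P(N) \geqslant p_0\}) \leqslant \sum_{p_0 \leqslant p \leqslant m} |U_p|,
\]
and the theorem follows once this sum is shown to be $o(m)$.

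\emph{Main obstacle.} The delicate point is this last density estimate. A uniform bound $|U_p| \leqslant C$ independent of $p$ suffices, since then $\sum_{p \leqslant m} |U_p| \leqslant C\,\pi(m) = O(m/\log m)$ by the prime number theorem. Such uniformity should follow from the Chen-Moriwaki adelic height theory: a uniform upper bound on $h(a)$ as $a$ ranges over the first coordinates of $\bigcup_{p \geqslant p_0} B_p$, combined with a Bogomolov-type lower bound on heights of non-roots-of-unity, would uniformly control $|M(a)|$ and hence $|U_p|$. Absent a fully uniform bound, one can exploit the stronger constraint that every bad $N$ in fact belongs to $U_p$ for \emph{every} prime $p \mid N$ with $p \geqslant p_0$, so that bad $N$ with many distinct large prime factors are subject to many simultaneous restrictions; working out this sieve-like refinement is the technical heart of the argument.
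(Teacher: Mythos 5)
Your Steps 1 and 2 are correct and recover the key content of the paper's Lemma~\ref{lem:height:zero:root:of:unity} and Proposition~\ref{prop:Fermat:prop:large}: for each prime $p \geqslant p_0$ the set $U_p$ of multiples of $p$ failing Fermat's property is finite; equivalently, there is a threshold $m_p$ such that $p\ZZ_{\geqslant m_p}$ consists entirely of good exponents. (The paper obtains $m_p$ more directly: with $H_p := \max\{h_S(x) : x \in F_p(K)\}$ and $c := \inf\{h_S(x) : x \in \PP^2(K),\ h_S(x) > 0\} > 0$ by Northcott, any $(x:y:z) \in F_{pm}(K)$ of positive height yields $(x^m:y^m:z^m) \in F_p(K)$, hence $mc \leqslant m\,h_S(x:y:z) \leqslant H_p$ and $m \leqslant H_p/c$.)

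The gap is Step 3, and while you correctly flag the difficulty, neither proposed remedy closes it. Bounding $\#(U \cap [1,m])$ by $\sum_{p_0 \leqslant p \leqslant m} |U_p|$ is too lossy, since nothing prevents $|U_p|$ (equivalently $m_p$) from growing with $p$; a uniform bound is neither available nor needed, and the ``many large prime factors'' refinement is not what makes the argument work. The correct move is the paper's Lemma~\ref{lem:probability:dist}: do not sum over all primes up to $m$, but fix finitely many primes at a time. Given $\epsilon > 0$, the divergence of $\sum_p 1/p$ lets one choose primes $p_0 \leqslant p_1 < \cdots < p_r$ with $\prod_{i=1}^r (1 - 1/p_i) \leqslant \epsilon$. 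Set $Q = p_1 \cdots p_r$ and $n_0 = \max_i p_i m_{p_i}$. Every $N \geqslant n_0$ with $\gcd(Q,N) \ne 1$ is divisible by some $p_i$ with $N/p_i \geqslant m_{p_i}$, hence is good. Thus the bad set lies inside $\{N < n_0\} \cup \{N : \gcd(Q,N) = 1\}$, and the second set meets $[1,m]$ in at most $(m/Q + 1)\varphi(Q) \leqslant (m+Q)\epsilon$ integers; letting $m \to \infty$ gives upper density at most roughly $\epsilon$, and $\epsilon$ was arbitrary. No uniformity in $p$ is required because only the finitely many primes $p_1,\ldots,p_r$ ever enter the estimate. (Your separate handling of $(p_0-1)$-smooth $N$ is also unnecessary: such $N$ are coprime to $Q$ and already absorbed into the second set.)
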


If $K$ is an arithmetic function field, that is, $K$ is a finitely generated field over $\QQ$, then the first condition (i) of the above theorem holds by Chen-Moriwaki \cite{MoARH,CMArakelovAdelic} or 
Proposition~\ref{prop:adelic:structure:arithmetic:function:field}.
Moreover, the second condition (ii) also holds for $p_0 = 5$ by Faltings \cite{FaGen}.
Therefore, as a consequence of the above theorem, one has the following corollary.

\begin{Corollary}
If $K$ is an arithmetic function field,  then
\[
\lim_{m\to\infty} 
\frac{\#\left( \{ N \in \ZZ \mid \text{$1 \leqslant N \leqslant m$ and $F_N$ has Fermat's property over $K$} \}\right)}{m} = 1.
\]
\end{Corollary}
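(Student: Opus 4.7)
The plan is to deduce the corollary as an immediate application of Theorem~\ref{thm:Fermat:property}, by verifying both of its hypotheses for an arithmetic function field $K$. Once the two hypotheses are in place there is nothing else to check, so the argument reduces to citing two external results.

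For hypothesis (i), I would invoke the construction of a proper adelic structure with Northcott's property on any finitely generated field over $\QQ$. This is provided by the general theory of Chen--Moriwaki \cite{MoARH,CMArakelovAdelic} and, in a form tailored to the present setting, by Proposition~\ref{prop:adelic:structure:arithmetic:function:field}; either reference suffices to conclude that $K$ carries such a structure.

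For hypothesis (ii), I would take $p_0 = 5$ and check that $F_p(K)$ is finite for every prime $p \geqslant 5$. For such $p$ the Fermat curve $F_p$ is smooth and projective over $\QQ$ of genus $(p-1)(p-2)/2 \geqslant 6$, hence in particular of genus $\geqslant 2$. Faltings' theorem, in the form proved for finitely generated fields of characteristic zero \cite{FaGen}, then guarantees that any such curve has only finitely many $K$-rational points, and we may apply it to $F_p$.

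With (i) and (ii) both established, Theorem~\ref{thm:Fermat:property} applies verbatim and yields the claimed density statement. I do not anticipate a genuine obstacle in this deduction: the entire substance of the corollary is already concentrated in Theorem~\ref{thm:Fermat:property} (which rests on the new Arakelov--adelic height theory) together with Faltings' theorem, and combining the two requires nothing beyond verifying that their hypotheses are met.
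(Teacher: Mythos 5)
Your proposal is correct and follows exactly the paper's own deduction: the paper likewise verifies hypothesis (i) via Chen--Moriwaki \cite{MoARH,CMArakelovAdelic} or Proposition~\ref{prop:adelic:structure:arithmetic:function:field} and hypothesis (ii) with $p_0 = 5$ via Faltings \cite{FaGen}, then cites Theorem~\ref{thm:Fermat:property}. The only extra content you supply is the explicit genus computation $(p-1)(p-2)/2 \geqslant 2$ for $p \geqslant 5$, which the paper leaves implicit.
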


In the case where $K = \QQ$, it was proved by \cite{Filaseta,Granville,HeathBrown} (cf. \cite{FermatRibenboim}).
A general number field case is treated in \cite{IKMFaltings}.
The above corollary gives an evidence of the following conjecture:

\begin{Conjecture}[Fermat's conjecture over an arithmetic function field]
Let $K$ be an arithmetic function field. Then is
there a positive integer $N_0$ depending on $K$ such that
$F_N$ has Fermat's property over $K$ for all $N \geqslant N_0$?
\end{Conjecture}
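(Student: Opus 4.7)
My plan is to upgrade Theorem~\ref{thm:Fermat:property} from natural density one to a cofinite statement by isolating the integers $N$ that escape the density-one argument and handling each family directly. First I would extract a quantitative version of the descent underlying the theorem: for any $N$ admitting a prime divisor $p \geq p_0$, writing $N = pM$, any $(x,y)\in K^2$ with $x^N+y^N=1$ yields $(x^M,y^M)\in F_p(K)$, a finite set by hypothesis (ii). The Chen--Moriwaki height satisfies $h(x)=h(a)/M$ for $a=x^M$, while Northcott's property guarantees
\[
\epsilon(K) := \inf\bigl\{h(\xi) : \xi\in K\setminus(\{0\}\cup\mu(K))\bigr\} > 0,
\]
so $x\in\{0\}\cup\mu(K)$ as soon as $M > h(a)/\epsilon(K)$. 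Running this bound over the finitely many $a$ arising from $F_p(K)$ produces an effective threshold $M_0(p)$ beyond which Fermat's property is automatic.

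Next I would partition the residual $N$ into three families: prime powers $N=p^k$ with $p\geq p_0$, for which the descent with $M=p^{k-1}$ still applies and delivers Fermat's property once $k\geq k_0(p)$; integers whose prime factors all lie below $p_0$, a sparse family requiring a direct treatment of small-exponent Fermat equations over $K$; and, most seriously, the primes $N=p$ themselves, where $M=1$ and the descent collapses. To close the prime power case uniformly in $p$, I would seek an effective bound of the form
\[
\max_{(a,b)\in F_p(K)\setminus(\{0\}\cup\mu(K))^2}\, h(a)\ \leq\ C(K)\cdot\Psi(p),
\]
with $\Psi$ growing slowly enough that $\Psi(p)/p^{k-1} < \epsilon(K)$ for all $p^k$ sufficiently large. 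Such a bound is an arithmetic-function-field analogue of the uniform Mordell conjecture and could plausibly be approached via Vojta-type inequalities combined with the arithmetic intersection theory of \cite{CMArakelovAdelic}.

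The decisive obstacle is the prime case $N=p$. There is no exponent to descend along, so one must show that for every sufficiently large prime $p$, the finite set $F_p(K)$ is already contained in $(\{0\}\cup\mu(K))^2$; this is essentially a strong form of Fermat's last theorem over the arithmetic function field $K$, and no analogue of the modularity techniques used over $\mathbb{Q}$ exists in this generality. A complete proof of the conjecture therefore seems to require either a uniform Lehmer-type lower bound on $h$ over $K^{\times}$ together with explicit height estimates for the points of $F_p(K)$, or a genuinely new arithmetic input beyond the height theory developed in \cite{MoARH,CMArakelovAdelic}. In its present form the conjecture thus reduces to two sharply identified problems, but either one lies beyond what the tools of the present paper can furnish.
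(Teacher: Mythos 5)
The statement you have set out to prove is not a result of the paper at all: it is stated as a \emph{Conjecture}, posed explicitly as an open question, and the paper proves only the density-one statement (Theorem~\ref{thm:Fermat:property} and its corollary), offering it as evidence for the conjecture. Your proposal, as you yourself concede in its closing paragraph, is likewise not a proof: it reduces the conjecture to (a) a height bound for the points of $F_p(K)$ that is uniform, or slowly growing, in the prime $p$, and (b) the prime-exponent case $N=p$, where no descent is available, and neither of these is established here or in the literature you invoke. A reduction of an open conjecture to two other open problems cannot stand as a proof of the statement, so the decisive gap is simply that the conclusion is never reached --- consistent, to be fair, with the paper's own assessment that the question is open.

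Your partial analysis is nevertheless sound and in fact reproduces the mechanism the paper uses for the density result: your constant $\epsilon(K)>0$ is the quantity $a$ in the proof of Proposition~\ref{prop:Fermat:prop:large}, and your descent from exponent $N=pM$ to the finite set $F_p(K)$, with a threshold $M_0(p)$ beyond which Fermat's property holds, is exactly that proposition; the paper then aggregates these thresholds over finitely many primes via Lemma~\ref{lem:probability:dist} to obtain density one rather than cofiniteness. You also identify the right obstruction: without uniformity in $p$, the thresholds $M_0(p)$ may grow, and exponents that are primes or small multiples of large primes escape the argument entirely; closing that case amounts to a uniform Mordell/strong Fermat statement over $K$, beyond the height theory of \cite{CMArakelovAdelic}. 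One smaller correction: your family of $N$ all of whose prime factors lie below $p_0$ does not need a ``direct treatment of small-exponent Fermat equations'' --- for all but finitely many such $N$ one can descend to $F_4$, $F_9$, etc.\ (finite by Faltings over finitely generated fields) with a large cofactor, and the finitely many leftovers (e.g.\ $N=1,2$, for which Fermat's property genuinely fails over any $K\supseteq\QQ$) are harmless since the conjecture only concerns $N\geqslant N_0$. The true bottleneck is the prime and near-prime exponents, exactly as you say; but naming the bottleneck is where the proposal ends, not where a proof begins.
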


\section{Adelic structure of field}
\label{sec:adelic:structure}

Here we recall an adelic structure of a field and the height function with respect to the adelic structure
(for details, see \cite{CMArakelovAdelic}).

Let $K$ be a field.
An \emph{adelic structure} of $K$ consists of data
$S = (K, (\Omega, \mathcal A, \nu), \phi)$ satisfying the following properties:
\begin{enumerate}
\renewcommand{\labelenumi}{\textup{(\arabic{enumi})}}
\item $(\Omega, \mathcal A, \nu)$ is a measure space, that is,
$\mathcal A$ is a $\sigma$-algebra of $\Omega$ and $\nu$ is a measure on $(\Omega, \mathcal A)$.

\item 
The last $\phi$ is a map
from $\Omega$ to $M_K$, where $M_K$ is the set of all absolute values of $K$. We denote the absolute value $\phi(\omega)$ ($\omega \in \Omega$) by $|\ndot|_{\omega}$.

\item For $a \in K^{\times}$, the function $(\omega \in \Omega) \mapsto \log |a|_{\omega}$ is $\nu$-integrable.
\end{enumerate}
We call the data $S$ an \emph{adelic curve}.
Moreover, $S$ is said to be \emph{proper} if
\begin{equation}\label{eqn:product:formula}
\int_{\Omega} \log |a|_{\omega} \nu(d\omega) = 0
\end{equation}
holds for all $a \in K^{\times}$. The equation \eqref{eqn:product:formula} is
called the \emph{product formula}.

From now on, we assume that $S$ is proper.
For $(x_0, \ldots, x_n) \in K^{n+1} \setminus \{ (0,\ldots,0) \}$,
we define $h_S(x_0, \ldots, x_n)$ to be
\[
h_S(x_0, \ldots, x_n) := \int_{\Omega} \log \max \{ |x_0|_{\omega},\ldots,
|x_n|_{\omega} \} \nu(d\omega).
\]
Note that by the product formula \eqref{eqn:product:formula},
the map $h_S : K^{n+1} \setminus \{ (0,\ldots,0) \} \to \RR$
descents to $\PP^n(K) \to \RR$. By abuse of notation, we denote it by
$h_S(x_0 :  \cdots : x_n)$, which is called the \emph{height} of 
$(x_0 :  \cdots : x_n)$ with respect to $S$.
Moreover, we say $S$ has \emph{Northcott's property} if, for any $C \in \RR$,
the set 
\[
\left\{ a \in K \ \left|\ \int_{\Omega} \log \max \{  |a|_{\omega}, 1 \} v(d\omega) \leqslant C \right\}\right.
\]
is finite. Note that if $x_i \not= 0$, then \[ 
\int_{\Omega} \log \max \{  |x_j/x_i|_{\omega}, 1 \} v(d\omega) = h_S(x_i: x_j)
\leqslant h_S(x_0 : \cdots : x_n)\] for all 
$j = 0, \ldots n$, so that
if $S$ has Northcott's property,
then the set
\[
\{ (x_0 : \cdots : x_n) \in \PP^n(K) \mid h_S(x_0 : \cdots : x_n) \leqslant C \}.
\]
is finite for any $C \in \RR$. Finally note that
if $K$ is an arithmetic function field (i.e., $K$ is a finitely generated field over $\QQ$), then
$K$ has a proper adelic structure with Northcott's property (cf. \cite{MoARH,CMArakelovAdelic} or 
Proposition~\ref{prop:adelic:structure:arithmetic:function:field}
for a construction without Arakelov theory).

\section{Proof of Theorem~\ref{thm:Fermat:property}}

Let $S = (K, (\Omega, \mathcal A, \nu), \phi)$ be a proper adelic structure of a field $K$ with Northcott's property. 

\begin{Lemma}\label{lem:height:zero:root:of:unity}
If $h_S(a_0 : \cdots : a_n) = 0$ for $(a_0 : \cdots : a_n) \in \PP^n(K)$,
then there is $\lambda \in K^{\times}$ such that
$\lambda a_i \in \{ 0 \} \cup \mu(K)$ for all $i=0,\ldots,n$.
\end{Lemma}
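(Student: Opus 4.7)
The plan is to reduce the lemma to the one-variable Kronecker-type statement that \emph{$h_S(1:c)=0$ forces $c\in\{0\}\cup\mu(K)$}, and then to prove that statement via Northcott's property applied to the orbit of $c$ under $c\mapsto c^n$.

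For the reduction, I would pick any $i_0$ with $a_{i_0}\neq 0$, set $\lambda := a_{i_0}^{-1}$ and $b_j := \lambda a_j$. Since $h_S$ descends to $\PP^n(K)$ by the product formula, $h_S(b_0:\cdots:b_n)=h_S(a_0:\cdots:a_n)=0$, while $b_{i_0}=1$ and $b_j=0$ exactly when $a_j=0$. For each $j$ with $a_j\neq 0$, the inequality pointed out in Section~\ref{sec:adelic:structure} gives
\[
0 \;\leqslant\; h_S(1:b_j) \;\leqslant\; h_S(b_0:\cdots:b_n) \;=\; 0,
\]
where the lower bound follows from the nonnegativity of $\log\max(1,|b_j|_\omega)$ (equivalently, from $\log\max(1,|b_j|_\omega)\geqslant \log|b_j|_\omega$ combined with the product formula for $b_j\in K^\times$). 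This reduces the lemma to: \emph{if $c\in K$ and $h_S(1:c)=0$, then $c\in\{0\}\cup\mu(K)$.}

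For the Kronecker-type step, assume $c\neq 0$ and $h_S(1:c)=0$. The pointwise identity $\log\max(1,|c^n|_\omega)=n\log\max(1,|c|_\omega)$ gives
\[
\int_{\Omega} \log\max\{|c^n|_\omega, 1\}\,\nu(d\omega) \;=\; n\,h_S(1:c) \;=\; 0
\]
for every $n\geqslant 1$, so all powers $c^n$ lie in the set appearing in the definition of Northcott's property with $C=0$. Northcott then forces $\{c^n : n\geqslant 1\}$ to be finite, so $c^m=c^n$ for some $m<n$, whence $c^{n-m}=1$ and $c\in\mu(K)$.

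Combining the two steps, the single scalar $\lambda=a_{i_0}^{-1}$ places every $\lambda a_i$ in $\{0\}\cup\mu(K)$, as required. There is no genuine obstacle in the argument; the substantive point is the Kronecker step, which itself reduces to multiplicativity of $c\mapsto h_S(1:c)$ under $c\mapsto c^n$ together with the pigeonhole consequence of Northcott's property. Everything else is bookkeeping via the product formula and the scaling invariance of $h_S$.
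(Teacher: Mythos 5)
Your proof is correct and uses essentially the same idea as the paper: the identity $h_S(\cdot^N) = N\,h_S(\cdot)$ combined with Northcott's property to force the powers to repeat. The only organizational difference is that you reduce coordinate-by-coordinate to the one-variable Kronecker statement $h_S(1:c)=0 \Rightarrow c\in\{0\}\cup\mu(K)$, while the paper applies the pigeonhole argument directly to the tuple $(a_1^N,\ldots,a_n^N)$ and reads off all coordinates simultaneously; both are fine.
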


\begin{proof}
Without of loss of generality, we may assume that $a_0 = 1$.
Note that \[h_S(a_0^N : \cdots : a_n^N) = N h_S(a_0 : \cdots : a_n) = 0,\]
so that, by Northcott's property, the set $\{ (a_1^N, \ldots, a_n^N) \mid
N \in \ZZ_{\geqslant 1} \}$ is finite. Therefore, there are $N, N' \in \ZZ_{\geqslant 1}$ such that $N < N'$ and
$(a_1^N, \ldots, a_n^N) = (a_1^{N'}, \ldots, a_n^{N'})$. Thus, if $a_i \not= 0$,
then $a_i^{N' - N} = 1$, as required.
\end{proof}

\begin{Proposition}\label{prop:Fermat:property:equiv}
The following are equivalent:
\begin{enumerate}
\renewcommand{\labelenumi}{\textup{(\arabic{enumi})}}
\item $F_N$ has Fermat's property over $K$.

\item $F_N(K) \subseteq \{ (x : y : z) \in \PP^2(K) \mid h_S(x : y : z) = 0 \}$.
\end{enumerate}
\end{Proposition}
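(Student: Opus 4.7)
The plan is to prove the two implications separately, using Lemma~\ref{lem:height:zero:root:of:unity} for one direction and, for the other, the elementary fact that any $u \in \mu(K)$ satisfies $|u|_{\omega} = 1$ for every $\omega \in \Omega$. This follows from $u^n = 1$, which gives $|u|_{\omega}^n = 1$ and hence $|u|_{\omega} = 1$ since $|u|_{\omega} \geqslant 0$.

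For $(1) \Rightarrow (2)$, I would fix an arbitrary projective point $(x:y:z) \in F_N(K)$ and split into two cases. When $z \neq 0$, rescaling produces the affine solution $(x/z, y/z) \in K^2$ of $X^N + Y^N = 1$, so Fermat's property forces $x/z, y/z \in \{0\} \cup \mu(K)$. Computing the height from the representative $(x/z, y/z, 1)$, the observation above gives $\max\{|x/z|_{\omega}, |y/z|_{\omega}, 1\} = 1$ for every $\omega$, whence $h_S(x:y:z) = 0$ directly from the definition. When $z = 0$, the equation $x^N + y^N = 0$ forces both $x$ and $y$ to be nonzero (else the other vanishes too, against projectivity), and then $x/y$ is a $(2N)$-th root of unity; the same absolute-value argument applied to the representative $(x/y, 1, 0)$ again yields height zero. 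Note that this boundary case lies outside the scope of Fermat's property as stated but is automatic.

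For $(2) \Rightarrow (1)$, given $(x, y) \in K^2$ with $x^N + y^N = 1$, the point $(x:y:1) \in F_N(K)$ has $h_S(x:y:1) = 0$ by hypothesis, so Lemma~\ref{lem:height:zero:root:of:unity} supplies $\lambda \in K^{\times}$ with $\lambda x$, $\lambda y$, and $\lambda$ all in $\{0\} \cup \mu(K)$. Since $\lambda \in K^{\times}$ we must have $\lambda \in \mu(K)$, and then $x = (\lambda x)\lambda^{-1}$ is either $0$ (if $\lambda x = 0$) or a quotient of two roots of unity, hence in $\mu(K)$; the same holds for $y$.

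The whole argument is essentially routine once Lemma~\ref{lem:height:zero:root:of:unity} is granted, and I do not foresee a genuine obstacle. The only mild point of care is the points-at-infinity case in direction $(1) \Rightarrow (2)$, which must be handled separately since Fermat's property only addresses affine solutions, but which fortunately turns out to be automatic.
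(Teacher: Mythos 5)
Your proof follows the same route as the paper's: split $(1) \Rightarrow (2)$ into the cases $z \neq 0$ and $z = 0$, handle the affine case via Fermat's property (noting the boundary case $z=0$ is automatic since $x/y$ is then a root of unity), and deduce $(2) \Rightarrow (1)$ from Lemma~\ref{lem:height:zero:root:of:unity}. The extra remark that $|u|_{\omega} = 1$ for $u \in \mu(K)$ just makes explicit a step the paper leaves implicit, namely why $h_S$ vanishes when the homogeneous coordinates lie in $\{0\} \cup \mu(K)$ with one of them equal to $1$.
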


\begin{proof}
(1) $\Longrightarrow$ (2) : Let $(x : y : z) \in F_N(K)$.
If $z \not= 0$, then
$(x/z)^N + (y/z)^N = 1$, so that $x/y, y/z \in \{ 0 \} \cup \mu(K)$, and hence
$h_S(x/z : y/z : 1) = 0$. If $z = 0$, then there is $\zeta \in \mu(K)$ such that $y = \zeta x$. Thus $h_S(x : y : 0) = h_S(1 : \zeta : 0) = 0$.

\medskip
(2) $\Longrightarrow$ (1) : Let $(x, y) \in K^2$ such that $x^N + y^N = 1$.
Then $(x : y : 1) \in F_N(K)$, so that $h_S(x : y : 1) = 0$.
Thus, by Lemma~\ref{lem:height:zero:root:of:unity},
there is $\lambda \in K^{\times}$ such that $\lambda x, \lambda y, \lambda
\in \{ 0 \} \cup \mu(K)$.
In particular, $\lambda \in \mu(K)$, so that $x, y \in \{0\} \cup \mu(K)$.
\end{proof}

\begin{Proposition}\label{prop:Fermat:prop:large}
If $F_{N}(K)$ is finite, then there is a positive integer $m_0$ such that
$F_{Nm}$ has Fermat's property over $K$ for all $m \geqslant m_0$.
\end{Proposition}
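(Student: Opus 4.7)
The plan is to reduce Fermat's property for $F_{Nm}$ to a height statement via Proposition~\ref{prop:Fermat:property:equiv}, and then to exploit the multiplicativity of $h_S$ under coordinatewise power maps together with Northcott's property.

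The key observation is that the coordinatewise $m$-th power map
\[
\sigma_m : \PP^2(K) \longrightarrow \PP^2(K), \qquad (x:y:z) \longmapsto (x^m : y^m : z^m),
\]
sends $F_{Nm}(K)$ into $F_N(K)$, since $x^{Nm} + y^{Nm} = z^{Nm}$ rewrites as $(x^m)^N + (y^m)^N = (z^m)^N$. Pulling the exponent $m$ out of the logarithm in the definition of the height gives
\[
h_S\bigl(x^m : y^m : z^m\bigr) = m\, h_S(x:y:z),
\]
and, since $F_N(K)$ is finite by hypothesis, the quantity $H := \max_{P \in F_N(K)} h_S(P)$ is a finite real number. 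Combining these yields
\[
0 \leqslant h_S(x:y:z) \leqslant \frac{H}{m} \qquad \text{for every } (x:y:z) \in F_{Nm}(K),
\]
where the non-negativity is obtained by choosing an affine representative with some coordinate equal to $1$ and applying the product formula, exactly as in the text after the definition of $h_S$.

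To conclude, I would use Northcott's property to isolate the possible nonzero heights. The set $T := \{P \in \PP^2(K) : h_S(P) \leqslant H\}$ is finite, so $T_0 := \{P \in T : h_S(P) > 0\}$ is also finite. If $T_0 = \varnothing$, every $m \geqslant 1$ already works; otherwise, set $h_{\min} := \min_{P \in T_0} h_S(P) > 0$ and pick a positive integer $m_0 > H/h_{\min}$. Then for $m \geqslant m_0$ and any $P \in F_{Nm}(K)$, the estimate above places $P$ in $T$ with $h_S(P) \leqslant H/m < h_{\min}$, forcing $P \notin T_0$ and hence $h_S(P) = 0$. Proposition~\ref{prop:Fermat:property:equiv} then delivers Fermat's property for $F_{Nm}$. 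I do not expect a genuine obstacle here: the argument is essentially a quantitative version of Lemma~\ref{lem:height:zero:root:of:unity}, and the only bookkeeping concerns turning the qualitative statement ``the only non-negative height below $H/m$ for large $m$ is $0$'' into an explicit threshold $m_0$ expressed in terms of $H$ and $h_{\min}$.
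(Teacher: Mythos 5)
Your proof is correct and follows essentially the same route as the paper: push $F_{Nm}(K)$ into $F_N(K)$ via the $m$-th power map, use $h_S(x^m:y^m:z^m) = m\,h_S(x:y:z)$ and finiteness of $F_N(K)$ to bound heights by $H/m$, then invoke Northcott's property to produce a positive lower bound $h_{\min}$ on nonzero heights and conclude via Proposition~\ref{prop:Fermat:property:equiv}. The only cosmetic difference is that the paper's threshold $m_0 = \lceil \exp(H/a)\rceil$ is chosen so the contradiction lands as $m \geqslant \exp(m)$, whereas yours ($m_0 > H/h_{\min}$) is the sharper, more direct choice; both are valid.
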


\begin{proof}
Let us consider $H$ and $a$ as follows:
\[
\begin{cases}
H = \max \{ h_S(x) \mid x \in F_N(K) \},\\
a = \inf \left\{ h_S(x) \mid \text{$x \in \PP^2(K)$ and $h_S(x) > 0$} \right\}.
\end{cases}
\]
Note that $a > 0$ because $S$ has Northcott's property.
If $m \in \ZZ_{\geqslant \lceil \exp(H/a) \rceil}$ and there is $(x : y : z) \in F_{Nm}(K)$
with $h_S(x : y : z) > 0$, then $h_S(x:y:z) \geqslant a$ and $(x^m : y^m : z^m) \in F_N(K)$, so that
\[
H \geqslant h_S(x^m : y^m : z^m) = mh_S(x:y:z) \geqslant ma,
\]
and hence $\exp(H/a) \geqslant \exp(m)$. Thus $m \geqslant \exp(m)$,
which is a contradiction because $m \geqslant 1$.
Therefore $F_{Nm}$ has Fermat's property over $K$ by Proposition~\ref{prop:Fermat:property:equiv}.
\end{proof}

\begin{Lemma}\label{lem:probability:dist}
Let $T$ be a subset of $\ZZ_{\geqslant 1}$ such that 
there exists a positive integer $p_0$ with the following property: for any prime $p \geqslant p_0$, one can find $m_p \in \ZZ_{\geqslant 1}$ depending on $p$ such that
$p\ZZ_{\geqslant m_p} \subseteq T$. 
Then \[ \lim_{m\to\infty} \frac{T \cap [1, m]}{m} = 1.\]
\end{Lemma}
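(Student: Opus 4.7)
The plan is to show that the complement $T^c := \ZZ_{\geqslant 1} \setminus T$ has upper density zero, which is equivalent to the stated limit. The basic idea is that because of the assumption $p\ZZ_{\geqslant m_p} \subseteq T$ for every prime $p \geqslant p_0$, the only integers left out of $T$ (ignoring an initial segment) are those avoiding divisibility by a chosen finite set of primes, and such integers are sparse.

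Fix $\varepsilon > 0$. The first step is to select finitely many primes $p_0 \leqslant p_1 < \cdots < p_k$ with
\[
\prod_{i=1}^{k} \left(1 - \frac{1}{p_i}\right) < \varepsilon.
\]
This is possible because $\sum_{p \textup{ prime}} 1/p$ diverges, hence $\prod_{p \geqslant p_0}(1 - 1/p) = 0$. Set $M := \max(m_{p_1}, \ldots, m_{p_k})$. Then for any integer $n \geqslant M$ that is divisible by some $p_i$, we have $n \in p_i \ZZ_{\geqslant m_{p_i}} \subseteq T$. Consequently, an integer $n \in [M, m]$ can fail to belong to $T$ only if $n$ is coprime to each of $p_1, \ldots, p_k$.

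The second step is a standard inclusion-exclusion estimate: the number of $n \in [1, m]$ coprime to $p_1 \cdots p_k$ is at most $m \prod_{i=1}^k (1 - 1/p_i) + 2^k$. Combining with the first step,
\[
\#\bigl(T^c \cap [1, m]\bigr) \leqslant M + m \prod_{i=1}^{k}\left(1 - \frac{1}{p_i}\right) + 2^{k} < M + \varepsilon m + 2^{k}.
\]
Dividing by $m$ and letting $m \to \infty$ yields $\limsup_{m \to \infty} \#(T^c \cap [1,m])/m \leqslant \varepsilon$, and since $\varepsilon$ was arbitrary this limsup is zero, which is the claim.

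The only nontrivial ingredient is the divergence of $\sum 1/p$ (equivalently $\prod_p(1-1/p) = 0$); the rest is elementary counting, so I do not expect any serious obstacle. One could slightly simplify by using only two well-chosen primes at a time at the cost of a less transparent bound, but the inclusion-exclusion approach above gives the cleanest presentation.
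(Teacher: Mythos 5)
Your proof is correct and follows essentially the same route as the paper's: pick finitely many primes $p_0 \leqslant p_1 < \cdots < p_k$ whose coprime density $\prod_i(1-1/p_i)$ is below $\varepsilon$, observe that past a finite threshold the complement of $T$ consists only of integers coprime to $p_1\cdots p_k$, and bound that count. The only cosmetic difference is in the counting step: you use inclusion--exclusion (error term $2^k$), while the paper uses periodicity modulo $Q=p_1\cdots p_k$ and Euler's totient (bound $(m/Q+1)\varphi(Q)$); both give the same conclusion.
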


\begin{proof}
Since the Riemann zeta function has a pole at $1$,
for $0 < \epsilon < 1$, there are 
primes $p_1, \ldots, p_r$ such that $p _0 \leqslant p_1 < \cdots < p_r$ and
$\prod_{i=1}^r (1 - 1/p_i) \leqslant \epsilon$. By our assumption, if we set $Q = p_1 \cdots p_r$, then there is $n_0 \in \ZZ_{\geqslant 1}$ such that \[\{ n \in \ZZ_{\geqslant 1} \mid \text{$n \geqslant n_0$ and $\mathrm{GCD}(Q, n) \not= 1$} \} \subseteq T,\]
that is,
\[
\ZZ_{\geqslant 1} \setminus T \subseteq \{ n \in \ZZ_{\geqslant 1} \mid n < n_0 \} \cup
\{ n \in \ZZ_{\geqslant 1} \mid \mathrm{GCD}(Q, n) = 1 \},
\]
so that
\[
\#\left( (\ZZ_{\geqslant 1} \setminus T) \cap [1, m] \right) \leqslant (n_0-1) + \#\{ 1 \leqslant n \leqslant m \mid \mathrm{GCD}(Q, n) = 1 \}.
\]
As $\#\{ 1 \leqslant n \leqslant Q \mid \mathrm{GCD}(Q, n) = 1 \}$ is equal to the Euler number $\varphi(Q)$,
we obtain
\begin{multline*}
\#\{ 1 \leqslant n \leqslant m \mid \mathrm{GCD}(Q, n) = 1 \}  \\
\leqslant (m/Q + 1) \varphi(Q) = (m + Q) (1-1/p_1) \cdots (1 - 1/p_r) \leqslant (m+Q)\epsilon.
\end{multline*}
Thus, for $m \geqslant \max \{ (n_0-1)/\epsilon, Q/\epsilon \}$,
\begin{align*}
1 \geqslant \frac{\#(T \cap [1, m])}{m} & = 1 - \frac{\#((\ZZ_{\geqslant 1} \setminus T) \cap [1, m])}{m}
\geqslant 1 - \frac{(n_0-1) + (m+Q)\epsilon}{m} \\
& = 1 -\left( \frac{n_0-1}{m} + \left(1 + \frac{Q}{m}\right) \epsilon \right) 
\geqslant 1 - (\epsilon + (1 + \epsilon) \epsilon) \geqslant 1 - 3\epsilon,
\end{align*}
as required.
\end{proof}

\begin{proof}[Proof of Theorem~\ref{thm:Fermat:property}]
Theorem~\ref{thm:Fermat:property} follows from Proposition~\ref{prop:Fermat:prop:large} and
Lemma~\ref{lem:probability:dist}.
\end{proof}

\begin{Remark}
(1) If $x^N + y^N = 1$ and $x, y \in \mu(K)$, then it is easy to see that
$(x^N, y^N) = (e^{\pi i/3}, e^{-\pi i/3})$ or $(e^{-\pi i/3}, e^{\pi i/3})$.

\medskip
(2) Similarly as the original Fermat's conjecture, we may consider the following problem:
for an arithmetic function field $K$ and a sufficiently large integer $N$,
does $x^N + y^N = 1$ ($x, y \in K$) imply $xy = 0$?
However, it doesn't seem to be good as a conjecture.
Indeed, we assume that $e^{\pi i/3}, e^{-\pi i/3} \in K$. 
If we set $x = e^{\pi i/3}$ and $y = e^{-\pi i/3}$, then
$x + y = 1$, $x^6 = y^6 = 1$, $x^5 = y$ and $y^5 = x$, so that
$x^N + y^N = 1$ and $xy \not= 0$ for an integer $N$ with $N \equiv 1, 5 \ \mathrm{mod}\ 6$.
\end{Remark}

\appendix
\section{Adelic structure of arithmetic function field}\label{sec:adelic:structure:arithmetic:function:field}
In this appendix we give a specific adelic structure of an arithmetic function field without using Arakelov Geometry. 

\medskip
Let $\mathcal{A}_{[0,1]^n}$ and $\nu_{[0,1]^n}$ be the Borel $\sigma$-algebra and
the standard Borel measure on $[0,1]^n$, respectively.
Let $p_n : [0,1]^n \to \CC^n$ be a continuous map given by $p_n(t_1, \ldots, t_n) = (e^{2\pi i t_1}, \ldots, e^{2\pi i t_n})$. 
Let $f \in \CC[X_1, \ldots, X_n]$. Note that if $f \not= 0$, then
$\log |f(e^{2\pi i t_1}, \ldots, e^{2\pi i t_n})|$ is integrable on $[0,1]^n$, so that
one can introduce $\bbmu(f )$ as follows:
\[
\bbmu(f ) := \begin{cases}
-\infty & \text{if $f = 0$},\\[1ex]
{\displaystyle  \int_{[0,1]^n} \log |f(e^{2\pi i t_1}, \ldots, e^{2\pi i t_n}) | dt_1 \cdots dt_n} & \text{otherwise}.
\end{cases}
\]
Then one has the following basic facts, which can be checked easily.

\begin{enumerate}
\renewcommand{\labelenumi}{\textup{(\alph{enumi})}}
\item 
Let $\Omega_{\infty}$ be the set of all $(t_1, \ldots, t_n) \in [0,1]^n$ such that
$e^{2\pi i t_1}, \ldots, e^{2\pi i t_n}$ are algebraically independent over $\QQ$.
Then $\Omega_{\infty}$ is a Borel subset of $[0,1]^n$ such that
$[0,1]^n \setminus \Omega_{\infty}$ is a null set.
In particular, $\Omega_{\infty}$ is dense in $[0,1]^n$.

\item
For any $d$ and $C$, the set \[\{ f \in \ZZ[X_1, \ldots, Z_n] \mid \text{$\bbmu(f) \leqslant C$ and $\deg(f) \leqslant d$} \}\]
is finite.

\item
If $f \in \CC[X_1, \ldots, X_n] \setminus \{ 0 \}$, then
\[
\bbmu(f) \geqslant \log \min\{ |a| \mid \text{$a$ is a non-zero coefficient of $f$} \}
\]
In particular, if $f \in \ZZ[X_1, \ldots, X_n] \setminus \{ 0 \}$,
then $\bbmu(f) \geqslant 0$.
\end{enumerate}

\bigskip
$\bullet$ Let $\Omega_h$ be the set of all prime divisors on $\PP^n_{\QQ} = \Proj(\QQ[T_0, \ldots, T_n])$. If we set $X_i := T_i/T_0$ for $i=1, \ldots, n$,
then the function field of $\PP^n_{\QQ}$ is $\QQ(X_1, \ldots, X_n)$.
For each $\omega \in \Omega_h$, let $P_{\omega}$ be a defining homogeneous
polynomial of $\omega$ such that $P_{\omega} \in \ZZ[T_0, \ldots, T_n]$ and $P_{\omega}$ is primitive. Note that $P_{\omega}$ is uniquely determined up to $\pm 1$.
We fix $\lambda \in \RR_{\geqslant 0}$. For $\omega \in \Omega_h$, a nonarchimedean absolute value $|\ndot|_{\omega}$ on $\QQ(X_1, \ldots, X_n)$ is defined to be
\[
| f|_{\omega} := \exp\left( \lambda \deg(P_{\omega}) +  \bbmu(P_\omega(1, X_1, \ldots, X_n))\right)^{-\ord_{\omega}(f)}\quad (\forall f \in \QQ(X_1, \ldots, X_n))
\]
Note that if $\lambda = 0$ and $\bbmu(P_\omega(1, X_1, \ldots, X_n)) = 0$,
then $|\ndot|_{\omega}$ is the trivial absolute value.

Let $\Omega_v$ be the set of all prime number of $\ZZ$. For $p \in \Omega_v$, 
let $|\ndot|_p$ be the $p$-adic absolute value of $\ZZ$ with $|p|_p = 1/p$.
For $f = \sum_{(i_1, \ldots, i_N) \in \ZZ^n_{\geqslant 0}} a_{i_1,\ldots, i_n} X_1^{i_1} \cdots X_n^{i_n} \in \QQ[X_1, \ldots, X_n]$, if we set $|f|_p = \max_{(i_1, \ldots, i_n) \in \ZZ_{\geqslant 0}^n} \{ |a_{i_1,\ldots, i_n}|_p \}$, then,
by Gauss' lemma, one can see that $|fg|_p = |f|_p |g|_p$, so that $|\ndot|_p$ extends to an absolute value of $\QQ(X_1, \ldots, X_n)$.

We set $\Omega_{\fin} := \Omega_h \coprod \Omega_v$. 
A measure space $(\Omega_{\fin}, \mathcal{A}_{\fin}, \nu_{\fin})$ is  the discrete measure space on $\Omega_{\fin}$ such that
$\nu_{\fin}(\{ \omega \}) = 1$ for all $\omega \in \Omega_{\fin}$.

\medskip
$\bullet$ By the above (a), $\Omega_{\infty}$ is a dense Borel set in $[0,1]^n$
with $\nu_{[0,1]^n}(\Omega_{\infty}) = 1$. For $t = (t_1, \ldots, t_n) \in \Omega_{\infty}$,
$| f |_t$ is defined by \[|f|_t := |f(e^{2\pi i t_1}, \ldots, e^{2\pi i t_n})|,\] where $|\ndot|$ is the usual absolute value of $\CC$. 
A measure space $(\Omega_{\infty}, \mathcal{A}_{\infty}, \nu_{\infty})$ is defined to be the restriction of the Borel $\sigma$-algebra $\mathcal{A}_{[0,1]^n}$ and the Borel measure $\nu_{[0,1]^n}$ on $[0,1]^n$.

\begin{Definition}\label{def:adelic:structure:Q:X1:Xn}
An adelic structure of $\QQ(X_1, \ldots, X_n)$ given by 
\[\left(\Omega_{\fin}, \mathcal{A}_{\fin}, \nu_{\fin} \right) \coprod 
\left(\Omega_{\infty}, \mathcal{A}_{\infty}, \nu_{\infty}\right)
\]
is denoted by $\Sigma_{n,\lambda}$.
\end{Definition}

\begin{Lemma}\label{lem:adelic:structure:Q:X1:Xn}
\begin{enumerate}
\renewcommand{\labelenumi}{\textup{(\arabic{enumi})}}
\item $\Sigma_{n,\lambda}$ is proper, that is,
\[
\sum_{\omega \in \Omega_{\fin}} \log |f|_{\omega} + \int_{\Omega_{\infty}} \log |f|_{t} d t = 0
\]
for all $f \in \QQ(X_1, \ldots, X_n)^{\times}$.

\item If $\lambda > 0$, then $\Sigma_{n,\lambda}$ has Northcott's property.
\end{enumerate}
\end{Lemma}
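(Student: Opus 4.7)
\emph{Part (1).} The map $f \mapsto \sum_{\omega \in \Omega_{\fin}} \log|f|_\omega + \int_{\Omega_\infty}\log|f|_t\,dt$ is a group homomorphism from $\QQ(X_1,\ldots,X_n)^\times$ to $\RR$, so by unique factorization in $\ZZ[X_1,\ldots,X_n]$ it suffices to verify vanishing on the generators $-1$, primes $p \in \ZZ$, and primitive irreducible polynomials $g \in \ZZ[X_1,\ldots,X_n]$ of positive degree. The case $f = -1$ is trivial. For $f = p$, only $\Omega_v$ and $\Omega_\infty$ contribute, giving $-\log p + \log p = 0$. For $f = g$, the only places with $|g|_\omega \neq 1$ are: the prime divisor $\omega_g$ defined by the homogenization $g^h$ (with $P_{\omega_g} = \pm g^h$ and $\ord_{\omega_g}(g)=1$), contributing $-\lambda\deg g - \bbmu(g)$; the divisor at infinity $\omega_0 = [T_0]$ (with $\ord_{\omega_0}(g) = -\deg g$ and $P_{\omega_0}(1,X_1,\ldots,X_n)=1$), contributing $\lambda\deg g$; and $\Omega_\infty$, contributing $\bbmu(g)$. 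These sum to zero.

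\emph{Part (2).} Assume $\lambda > 0$ and fix $C$. Let $f \in \QQ(X_1,\ldots,X_n)^\times$ satisfy $H(f) := \sum_{\omega \in \Omega_{\fin}}\log\max(|f|_\omega,1) + \int_{\Omega_\infty}\log\max(|f|_t,1)\,dt \leq C$. Write $f = (a/b)(g/h)$ in canonical form with $a,b \in \ZZ$ coprime, $b \geq 1$, and $g,h \in \ZZ[X_1,\ldots,X_n]$ primitive and coprime. Mimicking the Part (1) calculation with $\log\max(|f|_\omega,1)$ in place of $\log|f|_\omega$---using Gauss's lemma to conclude that the homogenizations $g^h, h^h$ remain coprime in $\ZZ[T_0,\ldots,T_n]$ so that pole divisors of $f$ outside $\omega_0$ correspond exactly to irreducible factors of $h$ (with no cancellation from zeros), and fact (c) to ensure $\lambda\deg P + \bbmu(P(1,X_1,\ldots,X_n)) \geq 0$ for each primitive $P \in \ZZ[T_0,\ldots,T_n]$---yields
\[
H(f) = \lambda\max(\deg g,\deg h) + \bbmu(h) + \log b + \int_{\Omega_\infty}\log\max(|f|_t,1)\,dt,
\]
with every summand $\geq 0$.

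\emph{Finiteness and obstacle.} Consequently $\deg h \leq C/\lambda$, $\bbmu(h) \leq C$, and $b \leq e^C$. The identity $H(1/f) = H(f)$---a formal consequence of the Part (1) product formula via $\log\max(u^{-1},1) = \log\max(u,1) - \log u$---gives the symmetric bounds $\deg g \leq C/\lambda$, $\bbmu(g) \leq C$, and $|a| \leq e^C$. Fact (b) then supplies only finitely many primitive $g, h \in \ZZ[X_1,\ldots,X_n]$ meeting the degree and Mahler-measure bounds, and $(a,b)$ ranges over a finite set of coprime integers, so $f$ lies in a finite set. The main technical obstacle is the non-negative term-by-term decomposition of $H(f)$: the normalization $\lambda\deg P_\omega + \bbmu(P_\omega(1,X_1,\ldots,X_n))$ at each geometric place is engineered precisely so that pole contributions are non-negative, zero contributions vanish, and the $\omega_0$ piece completes the degree control once $\lambda>0$.
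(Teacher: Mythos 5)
The proposal is correct and takes essentially the same approach as the paper: both parts rely on factoring into irreducible data in $\ZZ[X_1,\ldots,X_n]$, identifying each local contribution, and invoking fact (b) together with nonnegativity (fact (c)) for finiteness. Your Part (1) phrases the computation as a verification on multiplicative generators rather than the paper's direct expansion of $f = a\prod p_\omega^{\ord_\omega(f)}$, and your Part (2) splits the coprime fraction $f_2/f_1$ further into integer content $a/b$ and primitive parts $g/h$, then uses $H(1/f)=H(f)$ to bound the numerator data, whereas the paper bounds $\bbmu(f_1),\bbmu(f_2),\deg f_1,\deg f_2$ symmetrically at once via $\int\log\max\{|f_1|,|f_2|\}\geqslant\max\{\bbmu(f_1),\bbmu(f_2)\}$; these are equivalent reformulations of the same argument.
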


\begin{proof}
(1) We set $p_{\omega} := P_{\omega}(1, X_1, \ldots, X_n) \in \ZZ[X_1, \ldots, X_n]$.
Note that if we denote $\{ T_0 = 0 \}$ by $\omega_0$, then
$\bbmu(p_{\omega_0}) = 0$ and
$\ord_{\omega_0}(\ndot)$ on $\QQ(X_1, \ldots, X_n)$
is nothing more than $-\deg(\ndot)$.
Moreover, $\deg(P_{\omega}) = \deg(p_{\omega})$ for all $\omega \in \Omega_h \setminus \{\omega_0\}$.
Since $\ZZ[X_1, \ldots, X_n]$ is a UFD, one can set
\[f = a \cdot \prod_{\omega \in \Omega_h \setminus \{ \omega_{0} \}} p_{\omega}^{\ord_{\omega}(f)}\] for some $a \in \QQ$, so that 
\begin{multline*}
\sum_{\omega \in \Omega_{\fin}} \log |f|_{\omega} = 
\sum_{\omega \in \Omega_h \setminus \{ \omega_{0} \}} -\ord_{\omega}(f) (\lambda \deg(p_{\omega}) + \bbmu( p_{\omega})) \\
\kern20em + \lambda \deg(f) + \sum_{p \in \Omega_v} - \ord_p(a) \log p \\
= \sum_{\omega \in \Omega_h \setminus \{ \omega_{0} \}} -\ord_{\omega}(f) \bbmu(p_{\omega})  + \sum_{p \in \Omega_v} - \ord_p(a) \log p \\
= -\int_{[0,1]^n} \log |f(e^{2\pi i t_1}, \ldots, e^{2\pi i t_n})| dt_1 \cdots dt_n.
\end{multline*}
On the other hand, as $[0,1]^n \setminus \Omega_{\infty}$ is a null Borel subset by the above (a),
\begin{multline*}
\int_{\Omega_{\infty}} \log |f|_{t} d t = \int_{\Omega_{\infty}} \log |f(e^{2\pi i t_1}, \ldots, e^{2\pi i t_n})| d t_1 \cdots dt_n
\\ = \int_{[0,1]^n} \log |f(e^{2\pi i t_1}, \ldots, e^{2\pi i t_n})| d t_1 \cdots dt_n,
\end{multline*}
as required.

\medskip
(2) We need to see that the set $\{ f \in \QQ(X_1, \ldots, X_n) \mid h_S(1 : f) \leqslant C \}$ is finite for any $C$.
We choose $f_1, f_2 \in \ZZ[X_1, \ldots, X_n]$ such that $f = f_2/f_1$ and $f_1$ and $f_2$ have no common factors in $\ZZ[X_1, \ldots, X_n]$.
Then, by (1),
\begin{multline*}
h_{\Sigma_{n,\lambda}}(1 : f) = h_{\Sigma_{n,\lambda}}(f_1 : f_2) = \lambda \max \{ \deg(f_1), \deg(f_2) \} \\ +
\int_{[0,1]^n} \log \max \{ |f_1(e^{2\pi i t_1}, \ldots, e^{2\pi i t_n})|, |f_2(e^{2\pi i t_1}, \ldots, e^{2\pi i t_n})|\} dt_1 \cdots dt_n,
\end{multline*}
so that if $h_{\Sigma_{n,\lambda}}(1 : f) \leqslant C$, then
\[
\max \{ \bbmu(f_1), \bbmu(f_2)  \} + \lambda \max \{ \deg(f_1), \deg(f_2) \} \leqslant C.
\]
Note that $\max \{ \bbmu(f_1) , \bbmu(f_2)  \} \geqslant 0$ by the above (c),
and hence
\[
\max \{ \deg(f_1), \deg(f_2) \} \leqslant C/\lambda
\quad\text{and}\quad
\max \{ \bbmu(f_1) , \bbmu(f_2)  \} \leqslant C.
\]
Thus (2) follows from the above (b).
\end{proof}

\begin{Proposition}\label{prop:adelic:structure:arithmetic:function:field}
For an arithmetic function field $K$, one can give a proper adelic structure $S$ of $K$ such that $S$ has Northcott's property.
\end{Proposition}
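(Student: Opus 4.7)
The plan is to reduce to the purely transcendental case already handled in Lemma~\ref{lem:adelic:structure:Q:X1:Xn}. Choose a transcendence basis $X_1,\ldots,X_n$ of $K$ over $\QQ$ and set $K_0 := \QQ(X_1,\ldots,X_n)$, so that $K/K_0$ is a finite and automatically separable extension. Fix any $\lambda > 0$ and let $S_0 := \Sigma_{n,\lambda} = (K_0,(\Omega,\mathcal{A},\nu),\phi)$ be the adelic structure on $K_0$ given by Definition~\ref{def:adelic:structure:Q:X1:Xn}; by Lemma~\ref{lem:adelic:structure:Q:X1:Xn} it is proper and has Northcott's property.

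Next I would build $S$ on $K$ by fibering over $\Omega$. For each $\omega \in \Omega$, the tensor product decomposition
\[
K \otimes_{K_0} (K_0)_\omega \;\simeq\; \prod_{i=1}^{r(\omega)} K_{\omega,i}
\]
produces the finitely many extensions $|\ndot|_{\omega,i}$ of $|\ndot|_\omega$ to $K$, with local degrees $n_{\omega,i} := [K_{\omega,i}:(K_0)_\omega]$ summing to $[K:K_0]$. Set $\Omega_K := \coprod_{\omega \in \Omega} \{1,\ldots,r(\omega)\}$ with $\phi_K(\omega,i) := |\ndot|_{\omega,i}$, and equip it with the fibered measure
\[
\nu_K(E) := \frac{1}{[K:K_0]} \int_\Omega \Bigl( \sum_{i \,:\, (\omega,i) \in E} n_{\omega,i} \Bigr) \nu(d\omega).
\]
The decisive identity $\sum_{i=1}^{r(\omega)} n_{\omega,i} \log |a|_{\omega,i} = \log |N_{K/K_0}(a)|_\omega$ for $a \in K^{\times}$, which is immediate from the decomposition above, reduces the integrability of $\log|a|$ on $\Omega_K$ and the vanishing of its integral to the integrability and product formula for $N_{K/K_0}(a) \in K_0^{\times}$ with respect to $S_0$. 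Hence $S := (K,(\Omega_K,\mathcal{A}_K,\nu_K),\phi_K)$ is a proper adelic structure on $K$.

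To verify Northcott's property for $S$, I would argue via minimal polynomials. Given $a \in K$ with $h_S(1:a) \leqslant C$, let $f(T) = T^d + c_1 T^{d-1} + \cdots + c_d \in K_0[T]$ be its minimal polynomial, so $d \leqslant [K:K_0]$. Writing each $c_j$ locally as an elementary symmetric function in the conjugates of $a$ at $\omega$ yields a pointwise bound $\log\max(1,|c_j|_\omega) \leqslant d \max_i \log\max(1,|a|_{\omega,i}) + O(1)$, with the uniform constant arising from binomial coefficients at archimedean places. Integrating against $\nu$ gives $h_{S_0}(1:c_j) \leqslant dC + C'$, so Northcott's property of $S_0$ forces the tuple $(c_1,\ldots,c_d)$ into a finite set, and each such minimal polynomial has at most $[K:K_0]$ roots in $K$.

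The principal technical obstacle is the measurability bookkeeping in the fibered construction: since $r(\omega)$ and the local degrees vary with $\omega$, one has to ensure that measurable selections of places above $\omega$, and the attached functions $(\omega,i) \mapsto \log|a|_{\omega,i}$, assemble into a bona fide measure space. The clean way around this is to anchor every integrability claim on $\Omega_K$ to a measurable scalar-valued function on $\Omega$ attached to an element of $K_0$ (the norm, or a coefficient of a minimal polynomial) and invoke the norm identity above, thereby reducing every property required of $S$ to the corresponding already established property of $S_0$.
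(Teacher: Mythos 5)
Your construction of $S$ is the same as the paper's: the fibered set $\Omega_K = \coprod_{\omega} \{1,\ldots,r(\omega)\}$ is exactly the fiber product $\Omega_0 \times_{M_{K_0}} M_K$ that the paper uses, and your fibered measure with weights $n_{\omega,i}/[K:K_0]$ is the paper's pushforward measure defined via the operator $I_{K/K_0}$. Where you differ is that the paper simply cites \cite[Theorems~3.3.4, 3.3.7 and Corollary~3.5.7]{CMArakelovAdelic} to conclude that the resulting $S$ is a proper adelic curve with Northcott's property, while you unpack the arguments behind those citations: the norm identity $\sum_i n_{\omega,i}\log|a|_{\omega,i} = \log|N_{K/K_0}(a)|_\omega$ reduces properness to the product formula for $S_0$, and the minimal-polynomial argument reduces Northcott's property to that of $S_0$ (your constants need a visible factor of $[K:K_0]$ when passing from $\max_i$ to the weighted sum, but that is harmless). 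Both reductions are sound and are, morally, what the cited results prove in greater generality.

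The one genuine gap is exactly the point you flag at the end: to have a bona fide adelic curve you must specify a $\sigma$-algebra $\mathcal A_K$ on $\Omega_K$ and verify that $\nu_K$ is a well-defined measure on it and that $\omega \mapsto \log|a|_\omega$ is $\mathcal A_K$-measurable for every $a \in K^\times$. Since $r(\omega)$ and the local degrees $n_{\omega,i}$ vary measurably but not locally constantly over $\Omega_0$, this requires showing that $\omega_0 \mapsto I_{K/K_0}(\indic_A)(\omega_0)$ is $\mathcal A_0$-measurable for $A$ in the appropriate $\sigma$-algebra, which is precisely the content of \cite[Theorem~3.3.4]{CMArakelovAdelic}. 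Your suggested workaround --- anchoring integrability to norms or minimal-polynomial coefficients in $K_0$ --- handles the integrability of $\log|a|$ and the product formula, but it does not by itself produce the measure $\nu_K$ (one must first know $\nu_K$ is a measure before one can speak of integrability against it). The paper resolves this by defining $\mathcal A$ as the smallest $\sigma$-algebra making the projection $\pi_{K/K_0}$ and every $\omega \mapsto |a|_\omega$ measurable, and then invoking the cited measurability theorem. Your write-up should either quote that result or reconstruct it; as it stands the measure space $(\Omega_K,\mathcal A_K,\nu_K)$ is not actually exhibited.
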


\begin{proof}
Let $n$ be the transcendental degree of $K$ over $\QQ$.
Then there are $X_1, \ldots, X_n \in K$ such that
$X_1, \ldots, X_n$ are algebraically independent over $\QQ$ and
$K$ is a finite extension over $K_0 := \QQ(X_1, \ldots, X_n)$.
Fix $\lambda > 0$ and consider the adelic structure $\Sigma_{n,\lambda}$ of
$K_0$ as in Definition~\ref{def:adelic:structure:Q:X1:Xn}.
By Lemma~\ref{lem:adelic:structure:Q:X1:Xn}, $\Sigma_{n,\lambda}$ is a proper adelic curve
with Northcott's property. 
Let $\Sigma_{n,\lambda} = (K_0, (\Omega_0, \mathcal A_0, \nu_0), \phi_0)$, $\Omega := \Omega_0 \times_{M_{K_0}} M_{K}$ and $\phi : \Omega \to M_K$ the natural map.
Moreover, let $\mathcal A$ be the smallest
$\sigma$-algebra such that the projection $\Omega \to \Omega_0$ (which is denoted by $\pi_{K/K_0}$) is measurable and
$(\omega \in \Omega) \mapsto | a |_{\omega}$ is measurable for all $a \in K$.
For a function $g$ on $\Omega$ and $\omega_0 \in \Omega_0$, 
$I_{K/K_0}(g)(\omega_0)$ is defined to be
\[
I_{K/K_0}(g)(\omega_0) := \sum_{\omega \in \pi_{K/K_0}^{-1}(\omega_0)} \frac{[K_{\omega} : K_{0, \omega_0}]}{[K : K_0]} g(\omega),
\]
where $K_{\omega}$ and $K_{0, \omega_0}$ are the completions of $K$ and
$K_{0}$ with respect to $\omega$ and $\omega_0$, respectively.
Note that $I_{K/K_0}(g)$ is a function on $\Omega_0$.
For $A \in \mathcal A$, by \cite[Theorem~3.3.4]{CMArakelovAdelic},
$I_{K/K_0}(\indic_A)$ is $\mathcal A_0$-measurable, so that
we define $\nu(A)$ to be
\[
\nu(A) := \int_{\Omega_0} I_{K/K_0}(\indic_A) \nu_0(d\omega_0).
\]
By \cite[Theorem~3.3.7 and Corollary~3.5.7]{CMArakelovAdelic},
$S = (K, (\Omega, \mathcal A, \nu), \phi)$ is a proper adelic curve
with Northcott's property.
\end{proof}

\bigskip

\end{document}